\newtheorem{theorem}{Theorem}[section]
\newtheorem{lemma}[theorem]{Lemma}
\theoremstyle{definition}
\newtheorem{definition}[theorem]{Definition}
\numberwithin{equation}{section}
\begin{document}

\title[Fubini Type Theorems]
{Fubini Type Theorems \\ for the strong McShane and strong Henstock-Kurzweil integrals}

\author{ Sokol Bush Kaliaj }

\address{
Mathematics Department, 
Science Natural Faculty, 
University of Elbasan,
Elbasan, 
Albania.
}

\email{sokol\_bush@yahoo.co.uk}

\thanks{}

\subjclass[2010]{Primary 28B05, 46B25; Secondary 46G10 }

\keywords{Fubini type theorems, strong McShane integral, strong Henstock-Kurzweil integral.}

\begin{abstract} 
In this paper, we will prove Fubini type theorems for 
the strong McShane and strong Henstock-Kurzweil integrals   
of Banach spaces valued functions defined on a closed non-degenerate interval 
$[a,b] =[a_{1}, b_{1}] \times [a_{2}, b_{2}] \subset \mathbb{R}^{2}$.
\end{abstract}

\maketitle

\section{Introduction and Preliminaries}

The Fubini theorem belongs to the most powerful tools in Analysis. 
It establishes a connection between the so called double integrals and repeated integrals. 
Theorem X.2 in \cite{MIK} is the Fubini theorem for Bochner integral 
of Banach spaces valued functions defined on the Cartesian product  $U \times V$  
of Euclidean spaces $U$ and $V$.  
Here, we will prove a Fubini type theorem for the strong McShane integral 
of Banach spaces valued functions defined on two-dimensional compact intervals, see Theorem \ref{T2.1}. 
Henstock in \cite{HEN0} proved a Fubini--Tonelli type theorem for the 
Perron integral of real valued functions defined on two-dimensional compact intervals.
Tuo-Yeong Lee in  \cite{TUOI} and \cite{TUOII} proved several Fubini--Tonelli type theorems for the Henstock--Kurzweil integral of real valued functions defined on $m$-dimensional compact intervals 
in terms of the Henstock variational measures. 
In this paper, we will prove a Fubini type theorem for 
the strong Henstock--Kurzweil integral 
of Banach spaces valued functions defined on two-dimensional compact intervals, see Theorem \ref{T2.2}.

Throughout this paper $X$ denotes a real Banach space with its 
norm $|| \cdot ||$. 
The Euclidean space $\mathbb{R}^{2}$ is equipped with the maximum norm $||\cdot||_{\infty}$. 
$B_{2}(t,r)$ denotes the open ball in $\mathbb{R}^{2}$ with center $t=(t_{1},t_{2}) \in \mathbb{R}^{2}$  
and radius $r > 0$. 
$A^{o}$, $\partial A$ and $|A|$ denote, respectively, 
the interior, boundary and Lebesgue measure of a subset 
$A \subset \mathbb{R}^{2}$. 
For any two vectors $a = (a_{1}, a_{2})$ 
and
$b = (b_{1}, b_{2})$ 
with
$-\infty < a_{i} < b_{i} < +\infty$,  for $i=1, 2$, 
we set
$$
[a,b] = [a_{1}, b_{1}] \times [a_{2}, b_{2}], 
$$  
which is said to be a \textit{closed non-degenerate interval} in $\mathbb{R}^{2}$. 
By $\mathcal{I}_{[a,b]}$ the family  of all closed non-degenerate 
subintervals in $[a, b]$ is denoted.  
It is easy to see that
$$
\mathcal{I}_{[a,b]} = \mathcal{I}_{[a_{1},b_{1}]} \times \mathcal{I}_{[a_{2},b_{2}]}, 
$$
where $\mathcal{I}_{[a_{1},b_{1}]}$ ($\mathcal{I}_{[a_{2},b_{2}]}$) 
is the family  of all closed non-degenerate 
subintervals in $[a_{1},b_{1}]$ ($[a_{2},b_{2}]$).  
A pair $(t, I)$ of an interval $I \in \mathcal{I}_{[a,b]}$ and a point $t \in [a,b]$ 
is called an \textit{$\mathcal{M}$-tagged interval} in $[a,b]$, $t$ is the tag of $I$. 
Requiring $t \in I$ for the tag of $I$ 
we get the concept of an \textit{$\mathcal{HK}$-tagged interval} in $[a,b]$. 
A finite collection $P$ of 
$\mathcal{M}$-tagged intervals ($\mathcal{HK}$-tagged intervals) in $[a,b]$ 
is called an  \textit{$\mathcal{M}$-partition} (\textit{$\mathcal{HK}$-partition}) in $[a,b]$, 
if $\{ I \in \mathcal{I}_{[a,b]} : (t,I) \in P \}$ is a collection of pairwise non-overlapping intervals 
in $\mathcal{I}_{[a,b]}$. 
Two closed non-degenerate intervals $I, J \in \mathcal{I}_{[a,b]}$ are said to be 
\textit{non-overlapping} if $I^{o} \cap J^{o} = \emptyset$.
A positive function $\delta: [a,b] \to (0,+\infty)$ 
is said to be a \textit{gauge} on $[a,b]$. 
We say that an 
$\mathcal{M}$-partition ($\mathcal{HK}$-partition) $P$ 
in $[a,b]$ is 
\begin{itemize}
\item
$\mathcal{M}$-partition ($\mathcal{HK}$-partition) of $[a,b]$, if 
$\bigcup_{(t,I) \in P} I = [a,b]$,
\item 
$\delta$-fine if for each $(t,I) \in P$, we have 
$
I \subset B_{2}(t,\delta(t)). 
$
\end{itemize}
A function $F : \mathcal{I}_{[a,b]} \to X$ 
is said to be an \textit{additive interval function} if
$$
F(I \cup J) = F (I) + F(J)
$$
for any two non-overlapping intervals $I,J \in \mathcal{I}_{[a,b]}$ with 
$I \cup J \in \mathcal{I}_{[a,b]}$.

\begin{definition}\label{SM_SHK}  
A function $f: [a,b] = [a_{1}, b_{1}] \times [a_{2}, b_{2}]   \to X$ is said to be 
\textit{strongly McShane integrable} 
(\textit{strongly Henstock-Kurzweil integrable}) on $[a,b]$,  
if there is an additive interval function $F: \mathcal{I}_{[a,b]} \to X$ 
such that for every $\varepsilon>0$ 
there exists a gauge $\delta$ on $[a,b]$  
such that  
$$
\sum_{(t,I) \in P} \left \| f(t)|I| - F(I) \right \| < \varepsilon
$$
for every $\delta$-fine $\mathcal{M}$-partition ($\mathcal{HK}$-partition)
$P$ of $[a, b]$.

By  Theorem 3.6.5 in \cite{SCH}, if $f$ is 
\textit{strongly McShane integrable} 
(\textit{strongly Henstock-Kurzweil integrable}) on $[a,b]$, 
then $f$ is McShane integrable  
(\textit{Henstock-Kurzweil integrable}) on $[a,b]$ and 
$$
F(I) = (M)\int_{I} f \quad \left (  F(I) = (HK)\int_{I} f \right ), 
\text{ for all }I \in \mathcal{I}_{[a,b]},
$$
where $F$ is the additive interval function from the definition of strong integrability.
\end{definition} 
Let $K$ be a compact non-degenerate interval in an Euclidean space. 
We denote by $\mathcal{SM}(K)$ ($\mathcal{SHK}(K)$)  
the set of all strongly McShane (strongly Henstock-Kurzweil) 
integrable functions defined on $K$ with $X$-values.  
If $\mathcal{M}(K)$ ($\mathcal{HK}(K)$)  
is the set of all McShane (Henstock-Kurzweil) 
integrable functions defined on $K$ with $X$-values, 
then
\begin{equation*}
\mathcal{SHK}(K) \subset \mathcal{HK}(K),~ 
\mathcal{SM}(K) \subset \mathcal{M}(K),~
\mathcal{M}(K) \subset \mathcal{HK}(K),~
\mathcal{SM}(K) \subset \mathcal{SHK}(K).
\end{equation*}
If the Banach space $X$ is finite dimensional, 
then we obtain by Proposition 3.6.6 in \cite{SCH} that 
\begin{equation*}
\mathcal{SHK}(K) = \mathcal{HK}(K) 
\text{ and } 
\mathcal{SM}(K) = \mathcal{M}(K).
\end{equation*}

\begin{definition}\label{SSM_SSHK}
A function $f: [a,b] \to X$ has the property $\mathcal{S^{*}M}$ ($\mathcal{S^{*}HK}$) 
if for every $\varepsilon >0$ there is a gauge $\delta$ on $[a,b]$ such that
\begin{equation*}
\sum_{(t, I) \in P} \sum_{(s, J) \in Q} ||f(t) - f(s)|| . |I \cap J| < \varepsilon
\end{equation*}
for each pair of  $\delta$-fine $\mathcal{M}$-partitions ($\mathcal{HK}$-partitions) 
$P$ and $Q$ of $[a,b]$. 
\end{definition}
We denote by $\mathcal{S^{*}M}(K)$ ($\mathcal{S^{*}HK}(K)$) 
the set of all functions  defined on $K$ with $X$-values 
having the property $\mathcal{S^{*}M}$ ($\mathcal{S^{*}HK}$). 
Clearly, $\mathcal{S^{*}M}(K) \subset \mathcal{S^{*}HK}(K)$. 
By Lemma 3.6.11, Theorem 3.6.13 and Theorem 5.1.4 in \cite{SCH}, we have
\begin{equation*}
\mathcal{S^{*}HK}(K) \subset \mathcal{SHK}(K),~ 
\mathcal{S^{*}M}(K) = \mathcal{SM}(K), ~
 \mathcal{SM}(K) =  \mathcal{B}(K),
\end{equation*} 
where $\mathcal{B}(K)$ is the set of all 
Bochner integrable functions defined on $K$ with $X$-values.

The basic properties of the  McShane and Henstock-Kurzweil integrals 
can be found in \cite{SCH}, \cite{LEE1}, \cite{LEE2}, \cite{TUO}, \cite{BON}, \cite{DIP}, 
\cite{SKV}, 
\cite{CAO}, \cite{FRE}, 
\cite{GORD}, 
\cite{HEN1}-\cite{HEN3} and \cite{KURZ1}-\cite{KURZ3}.

Given a function $f: [a,b] \to X$, 
for each $t_{1} \in [a_{1}, b_{1}]$ and $t_{2} \in [a_{2}, b_{2}]$   
we define $f_{t_{1}}:[a_{2}, b_{2}] \to X$ 
and 
$f_{t_{2}}:[a_{1}, b_{1}] \to X$ 
by setting
$$
f_{t_{1}}(s_{2}) = f(t_{1}, s_{2}), 
\text{ for all }s_{2} \in [a_{2}, b_{2}] 
$$
and
$$
f_{t_{2}}(s_{1}) = f(s_{1}, t_{2}), 
\text{ for all }s_{1} \in [a_{1}, b_{1}], 
$$
respectively.

\section{The Main Results}

The main results are Theorems \ref{T2.1} and \ref{T2.2}. 
Let us start with a few auxiliary lemmas, which will be formulated for the case of the McShane integral 
($\mathcal{SM}[a,b]$, $\mathcal{S^{*}M}[a,b]$) 
but all of them hold for the Henstock-Kurzweil integral 
($\mathcal{SHK}[a,b]$, $\mathcal{S^{*}HK}[a,b]$) as well. 
It suffices to check their proofs with the
necessary replacement of $\mathcal{M}$-partitions 
by $\mathcal{HK}$-partitions, etc.


\begin{lemma}\label{L2.1}
Let $Z \subset [a,b] = [a_{1}, b_{1}] \times [a_{2}, b_{2}]$. 
Then the following statements hold. 
\begin{itemize}
\item[(i)]
Let $w : [a,b] \to [0, +\infty)$ be such that 
$w(t) >0$ for each $t \in Z$. 
If $w \mathbbm{1}_{Z}$ is McShane integrable on $[a,b]$ with
$$
(M)\int_{[a,b]} w \mathbbm{1}_{Z} =0,
$$
then $\mathbbm{1}_{Z}$ is McShane integrable on $[a,b]$ with
$
(M)\int_{[a,b]} \mathbbm{1}_{Z} =0.
$
\item[(ii)]
Let $f : [a,b] \to X$. 
If $\mathbbm{1}_{Z}$ is McShane integrable on $[a,b]$ with
$$
(M)\int_{[a,b]} \mathbbm{1}_{Z} =0,
$$
then $f \mathbbm{1}_{Z} \in \mathcal{SM}[a,b]$ with
$
(M)\int_{[a,b]} f \mathbbm{1}_{Z} =\theta,
$
where $\theta$ is the zero vector in $X$. 
\end{itemize}
\end{lemma}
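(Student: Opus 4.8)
The plan is to prove the two parts essentially independently, with part (i) reducing the hypothesis about a weighted indicator to the plain indicator, and part (ii) bootstrapping from the null-set statement to strong integrability of $f\mathbbm{1}_Z$.

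For part (i), I would write $Z = \bigcup_{n=1}^{\infty} Z_n$ where $Z_n = \{ t \in Z : w(t) > 1/n \}$. On $Z_n$ we have the pointwise bound $\mathbbm{1}_{Z_n} \le n\, w\, \mathbbm{1}_{Z}$, so that $0 \le \mathbbm{1}_{Z_n} \le n\, w\,\mathbbm{1}_Z$ with the dominating function McShane integrable with integral $0$. By monotonicity of the McShane integral (and the fact that a nonnegative McShane integrable function with zero integral forces the integrals over all subintervals to vanish), $\mathbbm{1}_{Z_n}$ is McShane integrable with $(M)\int_{[a,b]} \mathbbm{1}_{Z_n} = 0$; equivalently $|Z_n| = 0$ in the Lebesgue sense, since for real-valued (indeed finite-dimensional range) functions the McShane integral coincides with the Lebesgue integral. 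Then $|Z| = \lim_n |Z_n| = 0$ by countable subadditivity, hence $\mathbbm{1}_Z$ is Lebesgue (equivalently McShane) integrable on $[a,b]$ with integral $0$. Alternatively, to stay inside the gauge framework, I would produce for given $\varepsilon>0$ a gauge witnessing $\sum_{(t,I)\in P}\mathbbm{1}_Z(t)|I| < \varepsilon$ by combining, on each $Z_n$, the gauge controlling $w\mathbbm{1}_Z$ scaled appropriately, and choosing $n$ large; but invoking the Lebesgue-measure identification is cleaner.

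For part (ii), the hypothesis $|Z| = 0$ means $\mathbbm{1}_Z = 0$ almost everywhere, so $f\mathbbm{1}_Z = \theta$ almost everywhere, and the candidate primitive is the zero additive interval function $F \equiv \theta$. I would verify the defining inequality of Definition \ref{SM_SHK} directly: given $\varepsilon > 0$, since $(M)\int_{[a,b]}\mathbbm{1}_Z = 0$ there is a gauge $\delta_0$ with $\sum_{(t,I)\in P}\mathbbm{1}_Z(t)|I| < \varepsilon$ for every $\delta_0$-fine $\mathcal{M}$-partition. However this alone does not control $\sum \|f(t)\mathbbm{1}_Z(t)\|\,|I|$ because $f$ may be unbounded on $Z$; so I would split $Z$ along the level sets $E_k = \{t \in Z : k-1 \le \|f(t)\| < k\}$, each a null set, pick for each $k$ a gauge $\delta_k$ with the McShane sums of $\mathbbm{1}_{E_k}$ below $\varepsilon/(k\,2^k)$, and set $\delta(t) = \delta_k(t)$ for $t \in E_k$ and $\delta(t) = \delta_0(t)$ otherwise. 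For any $\delta$-fine $\mathcal{M}$-partition $P$, grouping the tags by the $E_k$ to which they belong gives
$$
\sum_{(t,I)\in P}\bigl\| f(t)\mathbbm{1}_Z(t)|I| - \theta \bigr\|
= \sum_{(t,I)\in P} \|f(t)\|\,\mathbbm{1}_Z(t)\,|I|
\le \sum_{k=1}^{\infty} k \sum_{\substack{(t,I)\in P \\ t \in E_k}} \mathbbm{1}_{E_k}(t)|I|
< \sum_{k=1}^{\infty} k \cdot \frac{\varepsilon}{k\,2^k} = \varepsilon,
$$
which is exactly the strong McShane criterion, giving $f\mathbbm{1}_Z \in \mathcal{SM}[a,b]$ with primitive $\theta$, hence $(M)\int_{[a,b]} f\mathbbm{1}_Z = \theta$.

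The main obstacle is the unboundedness of $f$ on the null set $Z$ in part (ii): the bare statement "$\mathbbm{1}_Z$ has zero integral" does not immediately bound the vector-valued Riemann-type sums, and one must decompose $Z$ into countably many level sets and distribute $\varepsilon$ geometrically so that even after multiplying by the (arbitrarily large) local size of $\|f\|$ the total stays below $\varepsilon$; the gauge must be assembled piecewise over this decomposition, and one should double-check that restricting a $\delta$-fine partition to the tags lying in a fixed $E_k$ yields a legitimate partial $\mathcal{M}$-partition to which the estimate for $\delta_k$ applies (it does, since any subcollection of an $\mathcal{M}$-partition is again an $\mathcal{M}$-partition, and $\delta$-fineness is inherited).
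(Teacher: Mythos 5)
Your proposal is correct. Part (ii) is essentially the paper's own argument: decompose along the level sets of $\|f\|$, choose a gauge for each level controlling the McShane sums of an indicator of a null set below $\varepsilon/(k2^k)$, assemble the gauge piecewise, and sum a geometric series; the point you flag about restricting to tags in a fixed $E_k$ is handled in the paper by citing the Saks--Henstock lemma (Lemma 3.4.2 in \cite{SCH}), and for a nonnegative integrand with zero integral it also follows by extending the partial partition to a full one, so there is no gap. Part (i) is where you diverge: the paper gives a self-contained gauge argument, slicing $[a,b]$ into the sets $A_n=\{1/(n+1)\le w<1/n\}$, using $\mathbbm{1}_Z(t)\le (n+1)\,w(t)\mathbbm{1}_Z(t)$ on $A_n$ and a gauge for each $n$ with tolerance $\varepsilon/((n+1)2^{n+1})$ --- i.e., the exact mirror image of the part (ii) argument. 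You instead pass to the Lebesgue integral via the identification of the McShane integral of real-valued functions with the Lebesgue integral, conclude $w\mathbbm{1}_Z=0$ a.e., hence $|Z|=0$; this is shorter and perfectly valid (one can even skip the $Z_n$ and note directly that $Z=\{w\mathbbm{1}_Z>0\}$ is the positivity set of a nonnegative a.e.-zero measurable function), at the cost of importing the McShane--Lebesgue equivalence rather than staying inside the gauge framework as the paper does. One small caveat: your sketched gauge alternative for (i) (``choosing $n$ large'') would not work with a single fixed $n$, since $w$ may be arbitrarily small on parts of $Z$; one genuinely needs to combine gauges over all $n$ simultaneously, as in the paper.
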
 
\begin{proof}
$(i)$ 
Let $\varepsilon >0$ be given. 
By Lemma 3.4.2 in \cite{SCH}, for each $n = 0, 1, 2, \dotsc$ 
there exists a gauge $\delta_{n}$ on $[a,b]$ such that 
\begin{equation}\label{eqL21.1}
\left |
\sum_{(t,I) \in P_{n}} w(t) \mathbbm{1}_{Z}(t)|I| - 0 
\right | 
=
\sum_{(t,I) \in P_{n}} w(t) \mathbbm{1}_{Z}(t)|I|
< \frac{\varepsilon}{(n+1)2^{n+1}}
\end{equation}
whenever $P_{n}$ is a $\delta_{n}$-fine $\mathcal{M}$-partition in $[a,b]$.   
We set   
$$
A_{n} = \left \{t \in [a,b] :  \frac{1}{n+1} \leq w(t) < \frac{1}{n} \right \}, 
\text{ for all }n \in \mathbb{N}, 
$$
$A_{0} = \{t \in [a,b] : w(t) \geq 1 \}$ and 
$B_{0} = \{t \in [a,b] : w(t) = 0 \}$. 
Since $B_{0} \cap Z = \emptyset$ it follows that
$\mathbbm{1}_{Z}(t) = 0$ for every $t \in B_{0}$. 
Define a gauge $\delta$ on $[a,b]$ as follows
$$
\delta(t) =
\left \{
\begin{array}{ll}
\delta_{n}(t) & \text{if } t \in A_{n},~ n = 0, 1, 2, \dotsc \\
1 & \text{if } t \in B_{0} \\
\end{array}
\right.
$$
and let $P$ be 
a $\delta$-fine $\mathcal{M}$-partition of $[a,b]$. 
Then we obtain by \eqref{eqL21.1} that
\begin{equation*}
\begin{split}
\left | 
\sum_{(t,I) \in P} \mathbbm{1}_{Z}(t)|I| - 0 
\right |
=&
\sum_{\underset{t \in B_{0}}{(t,I) \in P}} \mathbbm{1}_{Z}(t)|I| + 
\sum_{\underset{t \in A_{0}}{(t,I) \in P}} \mathbbm{1}_{Z}(t)|I| +
\sum_{n=1}^{+\infty} \sum_{\underset{t \in A_{n}}{(t,I) \in P}} \mathbbm{1}_{Z}(t)|I| \\
=&
\sum_{\underset{t \in A_{0}}{(t,I) \in P}} \mathbbm{1}_{Z}(t)|I| +
\sum_{n=1}^{+\infty} (n+1)\sum_{\underset{t \in A_{n}}{(t,I) \in P}} 
\frac{1}{n+1}\mathbbm{1}_{Z}(t)|I| \\
\leq&
\sum_{\underset{t \in A_{0}}{(t,I) \in P}} w(t) \mathbbm{1}_{Z}(t)|I| +
\sum_{n=1}^{+\infty} (n+1)\sum_{\underset{t \in A_{n}}{(t,I) \in P}} 
w(t) \mathbbm{1}_{Z}(t)|I| \\
<&
\frac{\varepsilon}{2} + \sum_{n=1}^{+\infty} \frac{\varepsilon}{2^{n+1}} 
=\frac{\varepsilon}{2} + \frac{\varepsilon}{2} = \varepsilon. 
\end{split}
\end{equation*}
This means that $\mathbbm{1}_{Z}$ is McShane integrable on $[a,b]$ with
$(M)\int_{[a,b]} \mathbbm{1}_{Z} =0$.

$(ii)$  
Let $\varepsilon >0$ be given. 
By Lemma 3.4.2 in \cite{SCH}, 
for each $n \in \mathbb{N}$ there exists a gauge $\delta_{n}$ on $[a,b]$ such that 
\begin{equation}\label{eqL21.2}
\left |
\sum_{(t,I) \in P_{n}} \mathbbm{1}_{Z}(t)|I| - 0 
\right | 
=
\sum_{(t,I) \in P_{n}} \mathbbm{1}_{Z}(t)|I|
< \frac{\varepsilon}{n 2^{n}}
\end{equation}
whenever $P_{n}$ is a $\delta_{n}$-fine $\mathcal{M}$-partition in $[a,b]$.   
We set   
$$
B_{n} = \left \{t \in [a,b] :  n-1 \leq ||f(t)|| < n \right \}, 
\text{ for all }n \in \mathbb{N}. 
$$
Define a gauge $\delta$ on $[a,b]$ so that
$$
\delta(t) =
\delta_{n}(t)
$$
whenever $n \in \mathbb{N}$ and $t \in B_{n}$. 
If $P$ is  
a $\delta$-fine $\mathcal{M}$-partition of $[a,b]$,  
then we obtain by \eqref{eqL21.2} that
\begin{equation*} 
\begin{split}
\sum_{(t,I) \in P} 
\left \| 
f(t) \mathbbm{1}_{Z}(t)|I| - \theta 
\right \|
\leq&
\sum_{n=1}^{+\infty} 
\sum_{\underset{t \in B_{n}}{(t,I) \in P}} ||f(t)|| \mathbbm{1}_{Z}(t)|I|
\\
<&
\sum_{n=1}^{+\infty} 
n\sum_{\underset{t \in B_{n}}{(t,I) \in P}} \mathbbm{1}_{Z}(t)|I| 
<
\sum_{n=1}^{+\infty} \frac{\varepsilon}{2^{n}} = \varepsilon. 
\end{split}
\end{equation*}
This means that $f \mathbbm{1}_{Z}$ is strongly McShane integrable on $[a,b]$ with
$(M)\int_{[a,b]}f  \mathbbm{1}_{Z} =\theta$ and the proof is finished.
\end{proof}


\begin{lemma}\label{L2.2}
Assume that $f \in \mathcal{SM}[a,b]$ and  
for each $t_{1} \in [a_{1}, b_{1}]$ and $t_{2} \in [a_{2}, b_{2}]$, 
we have
$$
f_{t_{1}} \in \mathcal{SM}[a_{2}, b_{2}]
\text{ and }
f_{t_{2}} \in \mathcal{SM}[a_{1}, b_{1}].
$$  
Then the following statements hold. 
\begin{itemize}
\item[(i)]
The function 
$g(t_{1}) = (M)\int_{[a_{2}, b_{2}]} f_{t_{1}}$, 
for all $t_{1} \in [a_{1}, b_{1}]$,  
is strongly McShane integrable on $[a_{1}, b_{1}]$ and 
$$(M)\int_{[a_{1}, b_{1}]} \left ( (M)\int_{[a_{2}, b_{2}]} f_{t_{1}} \right )
=(M)\int_{[a, b]} f.
$$
\item[(ii)]
The function 
$h(t_{2}) = (M)\int_{[a_{1}, b_{1}]} f_{t_{2}}$, 
for all $t_{2} \in [a_{2}, b_{2}]$,  
is strongly McShane integrable on $[a_{2}, b_{2}]$ and 
$$(M)\int_{[a_{2}, b_{2}]} \left ( (M)\int_{[a_{1}, b_{1}]} f_{t_{2}} \right )
=(M)\int_{[a, b]} f.
$$
\end{itemize}
\end{lemma}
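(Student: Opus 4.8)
The plan is to reduce everything to the classical Fubini theorem for the Bochner integral. Recall from the preliminaries that $\mathcal{SM}(K) = \mathcal{B}(K)$, so $f \in \mathcal{SM}[a,b]$ means that $f$ is Bochner integrable on $[a,b]$, and for every $t_{1} \in [a_{1},b_{1}]$ the hypothesis $f_{t_{1}} \in \mathcal{SM}[a_{2},b_{2}]$ says that $f_{t_{1}}$ is Bochner integrable on $[a_{2},b_{2}]$ and that $g(t_{1}) = (M)\int_{[a_{2},b_{2}]} f_{t_{1}}$ coincides with its Bochner integral. By symmetry it suffices to prove $(i)$; statement $(ii)$ follows by interchanging the roles of the two coordinates.

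First I would invoke Theorem X.2 of \cite{MIK} (Fubini for the Bochner integral): since $f$ is Bochner integrable on $[a_{1},b_{1}] \times [a_{2},b_{2}]$, there is a Lebesgue-null set $N \subset [a_{1},b_{1}]$ such that the function $\tilde g$ defined by $\tilde g(t_{1}) = (B)\int_{[a_{2},b_{2}]} f_{t_{1}}$ for $t_{1} \in [a_{1},b_{1}] \setminus N$ and $\tilde g(t_{1}) = \theta$ for $t_{1} \in N$ is Bochner integrable on $[a_{1},b_{1}]$, with $(B)\int_{[a_{1},b_{1}]} \tilde g = (B)\int_{[a,b]} f = (M)\int_{[a,b]} f$. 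Because $g(t_{1})$ equals the Bochner integral of $f_{t_{1}}$ for \emph{every} $t_{1}$, we have $g = \tilde g$ on $[a_{1},b_{1}] \setminus N$, and hence $g = \tilde g\, \mathbbm{1}_{[a_{1},b_{1}] \setminus N} + g\, \mathbbm{1}_{N}$ on $[a_{1},b_{1}]$.

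Then I would finish by treating the two summands separately. The function $\tilde g\, \mathbbm{1}_{[a_{1},b_{1}] \setminus N}$ is Bochner integrable (it is $\tilde g$ altered on the null set $N$), hence strongly McShane integrable, with $(M)\int_{[a_{1},b_{1}]} \tilde g\, \mathbbm{1}_{[a_{1},b_{1}] \setminus N} = (B)\int_{[a_{1},b_{1}]} \tilde g = (M)\int_{[a,b]} f$. For the second summand, since $N$ is Lebesgue-null in $[a_{1},b_{1}]$ the indicator $\mathbbm{1}_{N}$ is McShane integrable on $[a_{1},b_{1}]$ with integral $0$, so Lemma \ref{L2.1}$(ii)$ — applied with $g$ in place of $f$ and $Z = N$ — yields $g\, \mathbbm{1}_{N} \in \mathcal{SM}[a_{1},b_{1}]$ with $(M)\int_{[a_{1},b_{1}]} g\, \mathbbm{1}_{N} = \theta$. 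Adding the two and using linearity of the strong McShane integral gives $g \in \mathcal{SM}[a_{1},b_{1}]$ with $(M)\int_{[a_{1},b_{1}]} g = (M)\int_{[a,b]} f$, which is $(i)$. The only genuinely delicate point is the passage from the ``almost everywhere'' conclusion of the Bochner Fubini theorem to the everywhere-defined function $g$ in the statement; this is precisely what the decomposition above, together with Lemma \ref{L2.1}$(ii)$, is designed to absorb, the rest being bookkeeping. (Alternatively, one may argue directly that $g$, being a.e. equal to the Bochner integrable $\tilde g$, is itself strongly measurable with $\| g \|$ Lebesgue integrable, hence Bochner integrable, and then identify its integral with that of $\tilde g$.)
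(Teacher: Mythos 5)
Your proof is correct, but it takes a genuinely different route from the paper. The paper proves the lemma by a direct gauge argument: it takes a gauge $\delta$ on $[a,b]$ witnessing the strong McShane integrability of $f$, for each $t_{1}$ a gauge $\delta^{(t_{1})}_{2}\le\delta(t_{1},\cdot)$ on $[a_{2},b_{2}]$ witnessing that of $f_{t_{1}}$, fixes a $\delta^{(t_{1})}_{2}$-fine partition $Q^{(t_{1})}_{2}$ of $[a_{2},b_{2}]$, and then builds from these a gauge $\delta_{1}$ on $[a_{1},b_{1}]$ so that any $\delta_{1}$-fine partition $Q_{1}$ of $[a_{1},b_{1}]$ combines with the $Q^{(t_{1})}_{2}$ into a $\delta$-fine product partition of $[a,b]$; the Riemann-sum estimate for $g$ then splits into the one-dimensional errors \eqref{eqL22.2} and the two-dimensional error \eqref{eqL22.1}. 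Your argument instead reduces everything to the identification $\mathcal{SM}(K)=\mathcal{B}(K)$ and the Bochner--Fubini theorem of \cite{MIK}, absorbing the exceptional null set $N$ via Lemma \ref{L2.1}$(ii)$; all the ingredients you use are indeed available in the paper's preliminaries, and the application of Lemma \ref{L2.1} to a one-dimensional interval is harmless since its proof is dimension-independent (the paper itself uses it that way in Lemma \ref{L2.3}). What your route buys is brevity, at the cost of invoking the comparatively deep equivalence with the Bochner integral. What the paper's route buys is decisive for its purposes: the author explicitly needs the Henstock--Kurzweil analogue of this lemma for Theorem \ref{T2.2}, and the gauge proof transfers verbatim upon replacing $\mathcal{M}$-partitions by $\mathcal{HK}$-partitions, whereas your reduction breaks down there because $\mathcal{SHK}(K)\neq\mathcal{B}(K)$ in general (strongly Henstock--Kurzweil integrable functions need not be Bochner integrable). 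So your proof establishes the lemma as literally stated, but could not be cited for its $\mathcal{HK}$ counterpart.
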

\begin{proof}
Let $\varepsilon >0$ be given. 
Then, since  $f$ is strongly McShane integrable on 
$[a,b]$ there exists a gauge $\delta$ on $[a,b]$ such that 
\begin{equation}\label{eqL22.1}
\begin{split}
\sum_{(t, I) \in P} 
\left \| f(t)|I| - (M)\int_{I} f \right \|
< \frac{\varepsilon}{2}
\end{split}
\end{equation}
for each $\delta$-fine $\mathcal{M}$-partition $P$ of $[a,b]$.

Since $f_{t_{1}}$ is  strongly McShane integrable 
on $[a_{2}, b_{2}]$ whenever $t_{1} \in [a_{1}, b_{1}]$, 
there exists a gauge $\delta^{(t_{1})}_{2}$ on $[a_{2}, b_{2}]$ 
such that  
\begin{equation}\label{eqL22.2}
\begin{split}
\sum_{(t_{2}, I_{2}) \in Q^{(t_{1})}_{2}} 
\left \| f_{t_{1}}(t_{2})|I_{2}| - (M)\int_{I_{2}} f_{t_{1}} \right \|
< \frac{\varepsilon}{2(1+(b_{1} - a_{1}))}
\end{split}
\end{equation}
for each $\delta^{(t_{1})}_{2}$-fine $\mathcal{M}$-partition $Q^{(t_{1})}_{2}$ of $[a_{2}, b_{2}]$. 
We can choose each $\delta^{(t_{1})}_{2}$ so that 
$$
\delta^{(t_{1})}_{2}(t_{2}) \leq \delta(t_{1}, t_{2}), 
\text{ for all }t_{2} \in [a_{2}, b_{2}].
$$

We now define a gauge $\delta_{1}$ on $[a_{1}, b_{1}]$ by setting 
$$
\delta_{1}(t_{1}) = \min 
\left \{ 
\delta^{(t_{1})}_{2}(t_{2}) : 
(t_{2}, I_{2}) \in  Q^{(t_{1})}_{2} 
\right \}, 
\text{ for all }t_{1} \in [a_{1}, b_{1}]
$$
and let $Q_{1}$ be 
a $\delta_{1}$-fine $\mathcal{M}$-partition of $[a_{1}, b_{1}]$.  
Then
$$
Q = 
\left \{
((t_{1}, t_{2}), I_{1} \times I_{2}) :
(t_{1}, I_{1}) \in Q_{1}
\text{ and }
(t_{2}, I_{2}) \in Q^{(t_{1})}_{2}
\right \}
$$
is a $\delta$-fine $\mathcal{M}$-partition of $[a, b]$.  
We have
\begin{equation*}
\begin{split}
&\sum_{(t_{1}, I_{1}) \in Q_{1}} 
\left \| g(t_{1})|I_{1}| - (M)\int_{I_{1} \times [a_{2}, b_{2}]} f \right \| \leq \\
\leq 
&\sum_{(t_{1}, I_{1}) \in Q_{1}} 
\left ( |I_{1}| \left \| (M)\int_{[a_{2},b_{2}]} f_{t_{1}} - \sum_{(t_{2}, I_{2}) \in  Q^{(t_{1})}_{2}}  f_{t_{1}}(t_{2}) |I_{2}| \right \| 
+
\left\| \sum_{(t_{2}, I_{2}) \in  Q^{(t_{1})}_{2}}  f(t_{1},t_{2})|I_{1}|.|I_{2}|  - 
(M)\int_{I_{1} \times [a_{2}, b_{2}]} f  
\right \| \right )
\end{split}
\end{equation*}
and since
\begin{equation*}
\begin{split}
&\sum_{(t_{1}, I_{1}) \in Q_{1}} 
\left ( |I_{1}| \left \| (M)\int_{[a_{2},b_{2}]} f_{t_{1}} - \sum_{(t_{2}, I_{2}) \in  Q^{(t_{1})}_{2}}  f_{t_{1}}(t_{2}) |I_{2}| \right \| 
+
\left\| \sum_{(t_{2}, I_{2}) \in  Q^{(t_{1})}_{2}}  f(t_{1},t_{2})|I_{1}|.|I_{2}|  - 
(M)\int_{I_{1} \times [a_{2}, b_{2}]} f  
\right \| \right )\\
=&
\sum_{(t_{1}, I_{1}) \in Q_{1}} 
\left ( |I_{1}| 
\left \| \sum_{(t_{2}, I_{2}) \in  Q^{(t_{1})}_{2}}  
\left ( f_{t_{1}}(t_{2})|I_{2}| - (M)\int_{I_{2}}f_{t_{1}} \right )\right \| 
+
\left\|
\sum_{(t_{2}, I_{2}) \in  Q^{(t_{1})}_{2}}  
\left ( f(t_{1},t_{2})|I_{1}|.|I_{2}|  - (M)\int_{I_{1} \times I_{2}} f \right )  
\right \| \right )\\
\leq&
\sum_{(t_{1}, I_{1}) \in Q_{1}} 
|I_{1}| 
\sum_{(t_{2}, I_{2}) \in  Q^{(t_{1})}_{2}}  
\left \| f_{t_{1}}(t_{2})|I_{2}| - (M)\int_{I_{2}}f_{t_{1}} \right \| 
+
\sum_{(t_{1}, I_{1}) \in Q_{1}}  \sum_{(t_{2}, I_{2}) \in  Q^{(t_{1})}_{2}}  
\left \| f(t_{1},t_{2})|I_{1}|.|I_{2}|  - (M)\int_{I_{1} \times I_{2}} f \right \|
\end{split}
\end{equation*}
we obtain by \eqref{eqL22.1} and \eqref{eqL22.2} that
\begin{equation*}
\begin{split}
\sum_{(t_{1}, I_{1}) \in Q_{1}} 
\left \| g(t_{1})|I_{1}| - (M)\int_{I_{1} \times [a_{2}, b_{2}]} f \right \| 
<
\sum_{(t_{1}, I_{1}) \in Q_{1}} |I_{1}| 
\frac{\varepsilon}{2(1+(b_{1} - a_{1}))} + \frac{\varepsilon}{2}
< \frac{\varepsilon}{2} +  \frac{\varepsilon}{2} = \varepsilon.
\end{split}
\end{equation*}
This means that $g$ is strongly McShane integrable 
on $[a_{1}, b_{1}]$ and $(i)$ holds. 

Since the proof of $(ii)$ is similar to that of $(i)$, 
the lemma is proved.
\end{proof}


\begin{lemma}\label{L2.3} 
Let $f \in \mathcal{S^{*}M}[a,b]$, let 
$$
Z_{1} = 
\left \{
t_{1} \in [a_{1},b_{1}]: f_{t_{1}} \not\in \mathcal{S^{*}M}[a_{2}, b_{2}],  
\right \}, \quad
Z_{2} = 
\left \{
t_{2} \in [a_{2},b_{2}]: f_{t_{2}} \not\in \mathcal{S^{*}M}[a_{1}, b_{1}]  
\right \}
$$ 
and $Z = (Z_{1} \times [a_{2},b_{2}]) \cup  ([a_{1},b_{1}] \times Z_{2})$.

Then the following statements hold. 
\begin{itemize}
\item[(i)]
$\mathbbm{1}_{Z_{1}}$ is Mcshane integrable 
on $[a_{1}, b_{1}]$ with 
$(M)\int_{[a_{1}, b_{1}]} \mathbbm{1}_{Z_{1}} = 0$. 
\item[(ii)]
$\mathbbm{1}_{Z_{2}}$ is Mcshane integrable 
on $[a_{2}, b_{2}]$ with 
$(M)\int_{[a_{2}, b_{2}]} \mathbbm{1}_{Z_{2}} = 0$.
\item[(iii)]
$\mathbbm{1}_{Z}$ is Mcshane integrable  
on $[a, b]$ with 
$(M)\int_{[a, b]} \mathbbm{1}_{Z} = 0$.
\end{itemize}
\end{lemma}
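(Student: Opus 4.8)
The plan is to prove (i) by producing a nonnegative weight on $[a_1,b_1]$ that is strictly positive exactly on $Z_1$ and is (strongly) McShane integrable with integral $0$, and then invoking Lemma \ref{L2.1}(i); part (ii) then follows by symmetry, and (iii) is an elementary product-of-null-sets argument. Concretely, I would define $w_1 : [a_1,b_1] \to [0,+\infty)$ by $w_1(t_1)=0$ for $t_1\notin Z_1$, and for $t_1\in Z_1$ — where $f_{t_1}$ fails to have property $\mathcal{S^{*}M}$ on $[a_2,b_2]$ — by taking $w_1(t_1)>0$ to be a number witnessing this failure, i.e. such that for \emph{every} gauge $\eta$ on $[a_2,b_2]$ there are $\eta$-fine $\mathcal{M}$-partitions $P',Q'$ of $[a_2,b_2]$ with
$$\sum_{(s_2,I_2)\in P'}\ \sum_{(u_2,J_2)\in Q'}\|f(t_1,s_2)-f(t_1,u_2)\|\,|I_2\cap J_2|\ \geq\ w_1(t_1).$$
By construction $w_1>0$ on $Z_1$, $w_1\equiv 0$ off $Z_1$, so $w_1=w_1\mathbbm{1}_{Z_1}$.

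The core step is to show $w_1$ is strongly McShane integrable on $[a_1,b_1]$ with integral $0$. Given $\varepsilon>0$, since $f\in\mathcal{S^{*}M}[a,b]$ there is a gauge $\delta$ on $[a,b]$ with $\sum_{(t,I)\in P}\sum_{(s,J)\in Q}\|f(t)-f(s)\|\,|I\cap J|<\varepsilon$ for all $\delta$-fine $\mathcal{M}$-partitions $P,Q$ of $[a,b]$. For each $t_1\in Z_1$, applying the defining property of $w_1(t_1)$ to the one‑dimensional gauge $t_2\mapsto\delta(t_1,t_2)$ yields $\delta(t_1,\cdot)$-fine $\mathcal{M}$-partitions $P'_{t_1},Q'_{t_1}$ of $[a_2,b_2]$ whose double sum above is $\geq w_1(t_1)$. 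I then define a gauge $\gamma$ on $[a_1,b_1]$ by letting $\gamma(t_1)$, for $t_1\in Z_1$, be the (finite, positive) minimum of $\delta(t_1,t_2)$ over all tags $t_2$ occurring in $P'_{t_1}\cup Q'_{t_1}$, and $\gamma(t_1)=1$ otherwise. For a $\gamma$-fine $\mathcal{M}$-partition $\Pi$ of $[a_1,b_1]$ and each $(t_1,I_1)\in\Pi$ with $t_1\notin Z_1$, pick (Cousin's lemma) a $\delta$-fine $\mathcal{M}$-partition $R_{I_1}$ of the slab $I_1\times[a_2,b_2]$. Since the maximum‑norm ball $B_2((t_1,s_2),\delta(t_1,s_2))$ is the product of the two one‑dimensional balls and $\gamma(t_1)\le\delta(t_1,s_2)$, the families
$$P=\bigcup_{\substack{(t_1,I_1)\in\Pi\\ t_1\in Z_1}}\{((t_1,s_2),I_1\times I_2):(s_2,I_2)\in P'_{t_1}\}\ \cup\ \bigcup_{\substack{(t_1,I_1)\in\Pi\\ t_1\notin Z_1}}R_{I_1}$$
and the analogous $Q$ (with $Q'_{t_1}$ and the \emph{same} $R_{I_1}$) are $\delta$-fine $\mathcal{M}$-partitions of $[a,b]$. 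Keeping in $\sum_P\sum_Q$ only the nonnegative terms indexed by pairs sharing the same $(t_1,I_1)$ from the $Z_1$-part, and using $|(I_1\times I_2)\cap(I_1\times J_2)|=|I_1|\,|I_2\cap J_2|$ together with the choice of $P'_{t_1},Q'_{t_1}$, one gets
$$\sum_{(t_1,I_1)\in\Pi}w_1(t_1)\,|I_1|\ =\ \sum_{\substack{(t_1,I_1)\in\Pi\\ t_1\in Z_1}}w_1(t_1)\,|I_1|\ \leq\ \sum_{(t,I)\in P}\ \sum_{(s,J)\in Q}\|f(t)-f(s)\|\,|I\cap J|\ <\ \varepsilon.$$
As $w_1=w_1\mathbbm{1}_{Z_1}$, this shows $w_1\mathbbm{1}_{Z_1}\in\mathcal{SM}[a_1,b_1]$ with integral $0$, so Lemma \ref{L2.1}(i) gives that $\mathbbm{1}_{Z_1}$ is McShane integrable on $[a_1,b_1]$ with $(M)\int_{[a_1,b_1]}\mathbbm{1}_{Z_1}=0$, proving (i); (ii) follows by interchanging the two coordinates everywhere.

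For (iii): by (i) and (ii) the sets $Z_1\subset[a_1,b_1]$ and $Z_2\subset[a_2,b_2]$ are Lebesgue null, hence $|Z_1\times[a_2,b_2]|=|[a_1,b_1]\times Z_2|=0$, so $Z$ is Lebesgue null and $\mathbbm{1}_Z$ is McShane integrable on $[a,b]$ with $(M)\int_{[a,b]}\mathbbm{1}_Z=0$. I expect the main obstacle to be the bookkeeping in the lifting step of (i): one must check that the glued families $P$ and $Q$ are genuinely $\delta$-fine $\mathcal{M}$-partitions of \emph{all} of $[a,b]$ (pairwise non‑overlapping, covering, admissibly tagged) and that the single sub‑sum of $\sum_P\sum_Q$ indexed by a common first‑coordinate tagged interval already dominates $\sum_{\Pi}w_1(t_1)|I_1|$; the factorization of maximum‑norm balls into products of one‑dimensional balls is exactly what lets fineness survive the gluing.
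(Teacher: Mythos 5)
Your proof is correct and, for parts (i) and (ii), follows essentially the same route as the paper: quantify the failure of $\mathcal{S^{*}M}$ on each bad slice by a positive number $w_1(t_1)$ (so that $w_1=w_1\mathbbm{1}_{Z_1}$), use the gauge coming from $f\in\mathcal{S^{*}M}[a,b]$ to select witnessing partitions of $[a_2,b_2]$ fine for the slice gauge $\delta(t_1,\cdot)$, lift a $\gamma$-fine partition $\Pi$ of $[a_1,b_1]$ to a pair of $\delta$-fine $\mathcal{M}$-partitions of $[a,b]$, and conclude via Lemma \ref{L2.1}(i); the only cosmetic difference is how you fill the slabs over tags $t_1\notin Z_1$ (an arbitrary $\delta$-fine partition of $I_1\times[a_2,b_2]$, later discarded because every term of the double sum is nonnegative, versus the paper's choice $P^{(t_1)}_1=P^{(t_1)}_2$ giving product partitions). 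For (iii) you take a genuinely different and shorter path: rather than the paper's explicit gauge construction for $\mathbbm{1}_Z$, you note that (i) and (ii) make $Z_1$ and $Z_2$ Lebesgue null, hence $Z$ is null and $\mathbbm{1}_Z$ is McShane integrable with integral zero; this is legitimate given the standard identification of the McShane and Lebesgue integrals for scalar functions (used elsewhere in the paper via $\mathcal{SM}(K)=\mathcal{B}(K)$), and it in fact sidesteps a small slip in the paper's argument, which asserts $\mathbbm{1}_{Z}(t_1,t_2)=\mathbbm{1}_{Z_1}(t_1)\cdot\mathbbm{1}_{Z_2}(t_2)$ — that is the indicator of the intersection, not of the union $Z$; the correct estimate is $\mathbbm{1}_Z\le\mathbbm{1}_{Z_1\times[a_2,b_2]}+\mathbbm{1}_{[a_1,b_1]\times Z_2}$, after which one treats the two pieces symmetrically.
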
 
\begin{proof}
$(i)$ 
By virtue of Theorem 3.6.13 in \cite{SCH}, for each $t_{1} \in Z_{1}$ 
there exists $w(t_{1})>0$ with the following property: 
for each gauge $\delta^{(t_{1})}_{2}$ on $[a_{2}, b_{2}]$ 
there exist a pair of $\delta^{(t_{1})}_{2}$-fine $\mathcal{M}$-partitions 
$Q^{(t_{1})}_{1}, Q^{(t_{1})}_{2}$ of $[a_{2}, b_{2}]$ such that 
\begin{equation}\label{eqL24.1}
\begin{split} 
\sum_{(t_{2}, I_{2}) \in Q^{(t_{1})}_{1} } 
\sum_{(s_{2}, J_{2}) \in Q^{(t_{1})}_{2} }
\| f_{t_{1}}(t_{2}) - f_{t_{1}}(s_{2}) \| . |I_{2} \cap J_{2}|
\geq 
w(t_{1}).
\end{split}
\end{equation}
If we choose $w(t_{1}) = 0$ at all $t_{1} \in [a_{1}, b_{1}] \setminus Z_{1}$, 
then $w = w \mathbbm{1}_{Z_{1}}$. 
Thus, if $w$ is McShane integrable 
on $[a_{1}, b_{1}]$ with 
\begin{equation*}
(M)\int_{[a_{1}, b_{1}]} w = 0, 
\end{equation*}
then we obtain by $(i)$ in Lemma \ref{L2.1} that $(i)$ holds.

Since $f \in \mathcal{S^{*}M}[a,b]$,  
given $\varepsilon >0$ there exists a gauge $\Delta$ on $[a,b]$ such that 
\begin{equation}\label{eqL24.2}
\begin{split}
\sum_{(t,I) \in Q_{1} } \sum_{(s,J) \in Q_{2} } 
\| f(t) - f(s)\|.|I \cap J|
< \varepsilon
\end{split}
\end{equation}
for each pair of 
$\Delta$-fine $\mathcal{M}$-partitions $Q_{1}, Q_{2}$ of $[a,b]$. 
 
Note that for each $t_{1} \in [a_{1}, b_{1}]$ 
the function $\Delta^{(t_{1})}_{2} : [a_{2},b_{2}] \to (0, +\infty)$ 
defined by
$$
\Delta^{(t_{1})}_{2}(t_{2}) = \Delta(t_{1}, t_{2}), 
\text{ for all }t_{2} \in [a_{2}, b_{2}]
$$ 
is a gauge on $[a_{2}, b_{2}]$. 
Then, by \eqref{eqL24.1} for each $t_{1} \in Z_{1}$, we can choose 
a pair of $\Delta^{(t_{1})}_{2}$-fine $\mathcal{M}$-partitions 
$P^{(t_{1})}_{1}, P^{(t_{1})}_{2}$ of $[a_{2}, b_{2}]$ such that 
\begin{equation}\label{eqL24.3}
\begin{split}
\sum_{(t_{2}, I_{2}) \in P^{(t_{1})}_{1} } 
\sum_{(s_{2}, J_{2}) \in P^{(t_{1})}_{2} }
\| f_{t_{1}}(t_{2}) - f_{t_{1}}(s_{2}) \| . |I_{2} \cap J_{2}|
\geq 
w(t_{1}).
\end{split}
\end{equation}
For each $t_{1} \in [a_{1}, b_{1}] \setminus Z_{1}$, 
we choose a $\Delta^{(t_{1})}_{2}$-fine $\mathcal{M}$-partition 
$P^{(t_{1})}$ of $[a_{2}, b_{2}]$ and set  
$P^{(t_{1})}_{1} = P^{(t_{1})}_{2}=P^{(t_{1})}$. 
In this case, it easy to see that \eqref{eqL24.3} holds also.

We now define a gauge $\Delta_{1}$ on $[a_{1}, b_{1}]$ by setting 
$$
\Delta_{1}(t_{1}) = \min 
\left \{
\Delta(t_{1}, t_{2}) : (t_{2}, I_{2}) \in P^{(t_{1})}_{1} \cup P^{(t_{1})}_{2}
\right \},
\text{ for all }t_{1} \in [a_{1}, b_{1}],
$$ 
and let $\pi$ be a 
$\Delta_{1}$-fine $\mathcal{M}$-partition of $[a_{1}, b_{1}]$. 
Since
$$
P_{1} = 
\{
((t_{1},t_{2}), I_{1} \times I_{2}) : 
(t_{1}, I_{1}) \in \pi \text{ and } (t_{2}, I_{2}) \in P^{(t_{1})}_{1}  
\}
$$
and
$$
P_{2} = 
\{
((t_{1},s_{2}), I_{1} \times J_{2}) : 
(t_{1}, I_{1}) \in \pi \text{ and } (s_{2}, J_{2}) \in P^{(t_{1})}_{2}  
\}
$$
are $\Delta$-fine $\mathcal{M}$-partitions of $[a,b]$, 
we obtain by \eqref{eqL24.3} and \eqref{eqL24.2} that
\begin{equation*}
\begin{split}
&\left | \sum_{(t_{1}, I_{1}) \in \pi} w(t_{1})|I_{1}| - 0 \right | 
= \sum_{(t_{1}, I_{1}) \in \pi} w(t_{1})|I_{1}| \\
\leq&
\sum_{(t_{1}, I_{1}) \in \pi} |I_{1}|
\sum_{(t_{2}, I_{2}) \in P^{(t_{1})}_{1} } 
\sum_{(s_{2}, J_{2}) \in P^{(t_{1})}_{2} }
\| f_{t_{1}}(t_{2}) - f_{t_{1}}(s_{2}) \| . |I_{2} \cap J_{2}| \\
=&
\sum_{(t_{1}, I_{1}) \in \pi} 
\sum_{(t_{2}, I_{2}) \in P^{(t_{1})}_{1} } 
\sum_{(s_{2}, J_{2}) \in P^{(t_{1})}_{2} }
\| f_{t_{1}}(t_{2}) - f_{t_{1}}(s_{2}) \| . 
|(I_{1} \times I_{2}) \cap (I_{1} \times J_{2})|\\
=&
\sum_{\underset{(t_{1}, I_{1}) \in \pi}{(t_{2}, I_{2}) \in P^{(t_{1})}_{1}}} 
\sum_{\underset{(t_{1}, I_{1}) \in \pi}{(s_{2}, J_{2}) \in P^{(t_{1})}_{2}}}
\| f(t_{1}, t_{2}) - f(t_{1}, s_{2}) \| . 
|(I_{1} \times I_{2}) \cap (I_{1} \times J_{2})|\\
=&
\sum_{((t_{1},t_{2}), I_{1} \times I_{2}) \in P_{1}} 
\sum_{((t_{1},s_{2}), I_{1} \times J_{2}) \in P_{2}}
\| f(t_{1}, t_{2}) - f(t_{1}, s_{2}) \| . 
|(I_{1} \times I_{2}) \cap (I_{1} \times J_{2})| < \varepsilon.
\end{split}
\end{equation*} 
This means that $w$ is McShane integrable on $[a_{1},b_{1}]$ 
with $(M)\int_{ [a_{1},b_{1}]}w =0$.

The proof of $(ii)$ is similar to that of $(i)$.

$(iii)$ 
Since $\mathbbm{1}_{Z_{1}}$ is Mcshane integrable 
on $[a_{1}, b_{1}]$ with 
$(M)\int_{[a_{1}, b_{1}]} \mathbbm{1}_{Z_{1}} = 0$,
given $\varepsilon >0$ there exists a gauge $\delta_{1}$ on $[a_{1},b_{1}]$ such that 
\begin{equation}\label{eqL24.4}
\sum_{(t_{1}, I_{1}) \in \pi} \mathbbm{1}_{Z_{1}}(t_{1})|I_{1}| 
< \frac{\varepsilon}{1 + (b_{2} - a_{2})}
\end{equation}
for each $\delta_{1}$-fine $\mathcal{M}$-partition $\pi$ of $[a_{1},b_{1}]$.

We  now define a gauge $\delta$ on $[a,b]$ by setting 
$$
\delta(t_{1}, t_{2}) = \delta(t_{1}), 
\text{ for all }
(t_{1}, t_{2}) \in [a,b]= [a_{1}, b_{1}] \times [a_{2}, b_{2}]
$$
and let $P$ be a $\delta$-fine $\mathcal{M}$-partition of $[a,b]$. 
There exists a finite collection 
$\mathcal{D}_{P}$ of pairwise non-overlapping intervals in 
$[a_{2},b_{2}]$ such that
\begin{itemize}
\item
$[a_{2},b_{2}] = \bigcup_{I \in \mathcal{D}_{P}} I$,
\item
for each $I \in \mathcal{D}_{P}$ there exists 
$P^{(I)} \subset P$ such that 
$$
P^{(I)} = \{ I_{2} \in \mathcal{I}_{[a_{2},b_{2}]} : ((t_{1},t_{2}), I_{1} \times I_{2}) \in P
\text{ and }
I \subset I_{2} \}
$$
and
$$
I = \bigcap_{((t_{1},t_{2}), I_{1} \times I_{2}) \in P^{(I)}} I_{2}.
$$
\end{itemize}
Hence, for each $I \in \mathcal{D}_{P}$ the collection
$$
P^{(I)}_{1} = 
\{ (t_{1}, I_{1}): ((t_{1},t_{2}), I_{1} \times I_{2}) \in P^{(I)} \}
$$
is a $\delta_{1}$-fine $\mathcal{M}$-partition of $[a_{1},b_{1}]$. 
Note that
\begin{equation*}
\begin{split}
&\sum_{((t_{1},t_{2}), I_{1} \times I_{2}) \in P} \mathbbm{1}_{Z_{1}}(t_{1}) |I_{1}| . |I_{2}| 
=
\sum_{I \in \mathcal{D}_{P}} 
\left (
\sum_{((t_{1},t_{2}), I_{1} \times I_{2}) \in P^{(I)}} \mathbbm{1}_{Z_{1}}(t_{1}) |I_{1}| . |I \cap I_{2}|
\right ) \\
=&
\sum_{I \in \mathcal{D}_{P}} |I|
\left (
\sum_{((t_{1},t_{2}), I_{1} \times I_{2}) \in P^{(I)}} \mathbbm{1}_{Z_{1}}(t_{1}) |I_{1}| 
\right ) 
=
\sum_{I \in \mathcal{D}_{P}} |I|
\left (
\sum_{(t_{1}, I_{1}) \in P^{(I)}_{1}} \mathbbm{1}_{Z_{1}}(t_{1}) |I_{1}| 
\right ).
\end{split}
\end{equation*} 
and
$$
\mathbbm{1}_{Z}(t_{1}, t_{2}) = 
\mathbbm{1}_{Z_{1}}(t_{1}). \mathbbm{1}_{Z_{2}}(t_{2}),
\text{ for all }(t_{1}, t_{2}) \in [a,b] = [a_{1},b_{1}] \times [a_{2},b_{2}].
$$
Therefore, we obtain by \eqref{eqL24.4} that
\begin{equation*}
\begin{split}
\left | 
\sum_{((t_{1},t_{2}), I_{1} \times I_{2}) \in P} 
\mathbbm{1}_{Z}(t_{1},t_{2}) |I_{1}\times I_{2}| - 0
\right | 
=&
\sum_{((t_{1},t_{2}), I_{1} \times I_{2}) \in P} 
\mathbbm{1}_{Z_{1}}(t_{1}). \mathbbm{1}_{Z_{2}}(t_{2}) |I_{1}|.|I_{2}| \\
\leq&
\sum_{((t_{1},t_{2}), I_{1} \times I_{2}) \in P} 
\mathbbm{1}_{Z_{1}}(t_{1}) |I_{1}|.|I_{2}| \\
<&
\frac{\varepsilon}{1 + (b_{2} - a_{2})}\sum_{I \in \mathcal{D}_{P}} 
|I| 
= \frac{\varepsilon}{1 + (b_{2} - a_{2})}(b_{2} - a_{2})<\varepsilon.
\end{split}
\end{equation*} 
This means that $(iii)$ holds and the proof is finished.
\end{proof}

We are now ready to present the first main result.

\begin{theorem}\label{T2.1}
Let $f \in \mathcal{SM}[a,b]$, let 
$$
Z_{1} = 
\left \{
t_{1} \in [a_{1},b_{1}]: f_{t_{1}} \not\in \mathcal{SM}[a_{2}, b_{2}] 
\right \},
\quad
Z_{2} = 
\left \{
t_{2} \in [a_{2},b_{2}]: f_{t_{2}} \not\in \mathcal{SM}[a_{1}, b_{1}]  
\right \},
$$
$Z = (Z_{1} \times [a_{2},b_{2}]) \cup  ([a_{1},b_{1}] \times Z_{2})$ and 
$f_{0} = f . \mathbbm{1}_{[a,b] \setminus Z}$. 

Then the following statements hold.
\begin{itemize}
\item[(i)]
$f_{0} \in \mathcal{SM}[a,b]$ and for each $(t_{1},t_{2}) \in [a_{1}, b_{1}] \times  [a_{2}, b_{2}]$, we have 
$(f_{0})_{t_{1}} \in \mathcal{SM}[a_{2},b_{2}]$ and 
$(f_{0})_{t_{2}} \in \mathcal{SM}[a_{1},b_{1}]$.
\item[(ii)]
The function 
$$
t_{1} \to g(t_{1}) = (M)\int_{[a_{2}, b_{2}]} (f_{0})_{t_{1}},\text{ for all }t_{1} \in [a_{1}, b_{1}],
$$  
is strongly McShane integrable on $[a_{1}, b_{1}]$ and 
\begin{equation*}
\begin{split}
(M)\int_{[a_{1}, b_{1}]} \left ( (M)\int_{[a_{2}, b_{2}]} (f_{0})_{t_{1}} \right )
=(M)\int_{[a, b]} f.
\end{split}
\end{equation*} 
\item[(iii)]
The function 
$$
t_{2} \to h(t_{2}) = (M)\int_{[a_{1}, b_{1}]} (f_{0})_{t_{2}}, \text{ for all }t_{2} \in [a_{2}, b_{2}],
$$  
is strongly McShane integrable on $[a_{2}, b_{2}]$ and 
\begin{equation*}
\begin{split}
(M)\int_{[a_{2}, b_{2}]} \left ( (M)\int_{[a_{1}, b_{1}]} (f_{0})_{t_{2}} \right )
=(M)\int_{[a, b]} f.
\end{split}
\end{equation*}
\end{itemize}
\end{theorem}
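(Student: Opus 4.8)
The plan is to reduce everything to the three lemmas above, the key observation being that $\mathcal{SM}(K)=\mathcal{S^{*}M}(K)$ for every compact interval $K$, so that the sets $Z_{1},Z_{2},Z$ defined here coincide with those in Lemma \ref{L2.3} and $f\in\mathcal{SM}[a,b]=\mathcal{S^{*}M}[a,b]$ satisfies its hypothesis. Consequently $\mathbbm{1}_{Z_{1}}$ is McShane integrable on $[a_{1},b_{1}]$, $\mathbbm{1}_{Z_{2}}$ is McShane integrable on $[a_{2},b_{2}]$, and $\mathbbm{1}_{Z}$ is McShane integrable on $[a,b]$, all with integral $0$.

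For part (i), I would first write $f_{0}=f-f\mathbbm{1}_{Z}$. Since $\mathbbm{1}_{Z}$ has McShane integral $0$ on $[a,b]$, Lemma \ref{L2.1}(ii) gives $f\mathbbm{1}_{Z}\in\mathcal{SM}[a,b]$ with integral $\theta$; as $\mathcal{SM}[a,b]$ is a linear space this yields $f_{0}\in\mathcal{SM}[a,b]$ and $(M)\int_{[a,b]}f_{0}=(M)\int_{[a,b]}f$. For the sections, fix $t_{1}\in[a_{1},b_{1}]$. If $t_{1}\in Z_{1}$ then $\{t_{1}\}\times[a_{2},b_{2}]\subseteq Z$, so $(f_{0})_{t_{1}}\equiv\theta$ and hence lies in $\mathcal{SM}[a_{2},b_{2}]$. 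If $t_{1}\notin Z_{1}$ then $f_{t_{1}}\in\mathcal{SM}[a_{2},b_{2}]$ and, since $(t_{1},s_{2})\in Z$ then forces $s_{2}\in Z_{2}$, we have $(f_{0})_{t_{1}}=f_{t_{1}}-f_{t_{1}}\mathbbm{1}_{Z_{2}}$ on $[a_{2},b_{2}]$; applying Lemma \ref{L2.1}(ii) on $[a_{2},b_{2}]$ (with $\mathbbm{1}_{Z_{2}}$ in place of $\mathbbm{1}_{Z}$) shows $f_{t_{1}}\mathbbm{1}_{Z_{2}}\in\mathcal{SM}[a_{2},b_{2}]$ with integral $\theta$, so $(f_{0})_{t_{1}}\in\mathcal{SM}[a_{2},b_{2}]$. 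The statement for $(f_{0})_{t_{2}}$ follows by the symmetric argument using $\mathbbm{1}_{Z_{1}}$.

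For parts (ii) and (iii), part (i) shows that $f_{0}$ satisfies exactly the hypotheses of Lemma \ref{L2.2}: $f_{0}\in\mathcal{SM}[a,b]$, $(f_{0})_{t_{1}}\in\mathcal{SM}[a_{2},b_{2}]$ for every $t_{1}$, and $(f_{0})_{t_{2}}\in\mathcal{SM}[a_{1},b_{1}]$ for every $t_{2}$. Then Lemma \ref{L2.2}(i) gives that $g(t_{1})=(M)\int_{[a_{2},b_{2}]}(f_{0})_{t_{1}}$ is strongly McShane integrable on $[a_{1},b_{1}]$ with $(M)\int_{[a_{1},b_{1}]}g=(M)\int_{[a,b]}f_{0}=(M)\int_{[a,b]}f$, and Lemma \ref{L2.2}(ii) gives the corresponding statement for $h$.

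There is no genuine obstacle once one notices $\mathcal{SM}=\mathcal{S^{*}M}$; the only point requiring care is the slice analysis in (i), namely that for $t_{1}\notin Z_{1}$ the trace of $Z$ on $\{t_{1}\}\times[a_{2},b_{2}]$ equals $\{t_{1}\}\times Z_{2}$, which is what allows one to rewrite $(f_{0})_{t_{1}}$ as $f_{t_{1}}-f_{t_{1}}\mathbbm{1}_{Z_{2}}$ and invoke Lemma \ref{L2.1}(ii) on the lower-dimensional interval.
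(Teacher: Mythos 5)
Your proposal is correct and follows essentially the same route as the paper's proof: the identification $\mathcal{SM}=\mathcal{S^{*}M}$ to invoke Lemma \ref{L2.3}, then Lemma \ref{L2.1}(ii) and linearity to get $f_{0}=f-f\mathbbm{1}_{Z}\in\mathcal{SM}[a,b]$ with the same integral, the same two-case slice analysis (including the observation that for $t_{1}\notin Z_{1}$ the trace of $Z$ reduces to $Z_{2}$, so $(f_{0})_{t_{1}}=f_{t_{1}}-f_{t_{1}}\mathbbm{1}_{Z_{2}}$), and finally Lemma \ref{L2.2} for parts (ii) and (iii). No discrepancies to report.
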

\begin{proof}
$(i)$  
Since $\mathcal{SM}[a,b] = \mathcal{S^{*}M}[a,b]$,  
we obtain by Lemma \ref{L2.3} that the function $\mathbbm{1}_{Z}$ 
is McShane integrable with $(M)\int_{[a,b]}\mathbbm{1}_{Z} =0$. 
Hence,  by $(ii)$ in Lemma \ref{L2.1} we have 
$f \mathbbm{1}_{Z} \in \mathcal{SM}[a,b]$ 
and 
$(M)\int_{[a,b]}f \mathbbm{1}_{Z} = \theta$. 
It follows that 
\begin{equation}\label{eqT21.1}
\begin{split}
f - f \mathbbm{1}_{Z} = f_{0} \in \mathcal{SM}[a,b]
\quad\text{and}\quad
(M)\int_{[a, b]} f_{0} 
=&
(M)\int_{[a, b]} f. 
\end{split}
\end{equation}
We now fix an arbitrary $t_{1} \in [a_{1}, b_{1}]$. 
There are two cases to consider. 
\begin{itemize}
\item[(a)]
$t_{1} \in Z_{1}$. 
In this case, we have $(f_{0})_{t_{1}}(t_{2}) = \theta$ at all $t_{2} \in [a_{2},b_{2}]$. 
Thus,   $(f_{0})_{t_{1}} \in \mathcal{SM}[a_{2},b_{2}]$. 
\item[(b)]
$t_{1} \not\in Z_{1}$. 
In this case, we have $f_{t_{1}} \in \mathcal{SM}[a,b]$ and 
$(f_{0})_{t_{1}}(t_{2}) = f_{t_{1}}(t_{2})-f_{t_{1}}(t_{2})\mathbbm{1}_{Z_{2}}(t_{2})$ at all $t_{2} \in [a_{2},b_{2}]$. 
Lemma \ref{L2.3} together with Lemma \ref{L2.1} yields that 
$f_{t_{1}}.\mathbbm{1}_{Z_{2}} \in \mathcal{SM}[a_{2},b_{2}]$ 
with $(M)\int_{[a_{2},b_{2}]} f_{t_{1}}.\mathbbm{1}_{Z_{2}} = \theta$. 
Therefore, $(f_{0})_{t_{1}} \in \mathcal{SM}[a_{2},b_{2}]$.
\end{itemize}
Hence, we have $(f_{0})_{t_{1}} \in \mathcal{SM}[a_{2},b_{2}]$ 
for each $t_{1} \in [a_{1},b_{1}]$. 
Similarly, it can be proved that 
$(f_{0})_{t_{2}} \in \mathcal{SM}[a_{1},b_{1}]$ 
for each $t_{2} \in [a_{2},b_{2}]$.

Therefore, Lemma \ref{L2.2} together with $(i)$ and \eqref{eqT21.1} yields that 
$(ii)$ and $(iii)$ hold, and this ends the proof. 
\end{proof}

\begin{theorem}\label{T2.2}
Let $f \in \mathcal{S^{*}HK}[a,b]$, let 
$$
Z_{1} = 
\left \{
t_{1} \in [a_{1},b_{1}]: f_{t_{1}} \not\in \mathcal{S^{*}HK}[a_{2}, b_{2}] 
\right \},
\quad
Z_{2} = 
\left \{
t_{2} \in [a_{2},b_{2}]: f_{t_{2}} \not\in \mathcal{S^{*}HK}[a_{1}, b_{1}]  
\right \},
$$
$Z = (Z_{1} \times [a_{2},b_{2}]) \cup  ([a_{1},b_{1}] \times Z_{2})$ and 
$f_{0} = f \mathbbm{1}_{[a,b] \setminus Z}$. 

Then the following statements hold.
\begin{itemize}
\item[(i)]
$f_{0} \in \mathcal{SHK}[a,b]$ and for each $(t_{1},t_{2}) \in [a_{1}, b_{1}] \times  [a_{2}, b_{2}]$, we have 
$(f_{0})_{t_{1}} \in \mathcal{SHK}[a_{2},b_{2}]$ and 
$(f_{0})_{t_{2}} \in \mathcal{SHK}[a_{1},b_{1}]$.
\item[(ii)]
The function 
$$
t_{1} \to g(t_{1}) = (HK)\int_{[a_{2}, b_{2}]} (f_{0})_{t_{1}},\text{ for all }t_{1} \in [a_{1}, b_{1}],
$$  
is strongly Henstock-Kurweil integrable on $[a_{1}, b_{1}]$ and 
\begin{equation*}
\begin{split}
(HK)\int_{[a_{1}, b_{1}]} \left ( (HK)\int_{[a_{2}, b_{2}]} (f_{0})_{t_{1}} \right )
=(HK)\int_{[a, b]} f.
\end{split}
\end{equation*} 
\item[(iii)]
The function 
$$
t_{2} \to h(t_{2}) = (HK)\int_{[a_{1}, b_{1}]} (f_{0})_{t_{2}}, 
\text{ for all }t_{2} \in [a_{2}, b_{2}],
$$  
is strongly Henstock-Kurweil integrable on $[a_{2}, b_{2}]$ and 
\begin{equation*}
\begin{split}
(HK)\int_{[a_{2}, b_{2}]} \left ( (HK)\int_{[a_{1}, b_{1}]} (f_{0})_{t_{2}} \right )
=(HK)\int_{[a, b]} f.
\end{split}
\end{equation*}
\end{itemize}
\end{theorem}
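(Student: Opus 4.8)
The plan is to mirror the proof of Theorem \ref{T2.1}, exploiting the fact that the excerpt explicitly states all auxiliary lemmas hold verbatim for the Henstock--Kurzweil setting after replacing $\mathcal{M}$-partitions by $\mathcal{HK}$-partitions. The only structural difference is that we are given $f \in \mathcal{S^{*}HK}[a,b]$ rather than membership in $\mathcal{SHK}[a,b]$ itself; this is the right hypothesis because, unlike in the McShane case where $\mathcal{S^{*}M}(K) = \mathcal{SM}(K)$, we only have the one-sided inclusion $\mathcal{S^{*}HK}(K) \subset \mathcal{SHK}(K)$, so the property $\mathcal{S^{*}HK}$ is the one that propagates through the argument of Lemma \ref{L2.3}.

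First I would establish $(i)$. Apply the $\mathcal{HK}$-version of Lemma \ref{L2.3} to $f \in \mathcal{S^{*}HK}[a,b]$: this gives that $\mathbbm{1}_{Z_1}$, $\mathbbm{1}_{Z_2}$ and $\mathbbm{1}_Z$ are McShane integrable with integral zero, where $Z_1, Z_2, Z$ are exactly the sets named in the theorem. Then the $\mathcal{HK}$-version of part $(ii)$ of Lemma \ref{L2.1} (note its conclusion lands in the \emph{strong} class) yields $f\mathbbm{1}_Z \in \mathcal{SHK}[a,b]$ with $(HK)\int_{[a,b]} f\mathbbm{1}_Z = \theta$, hence
\begin{equation*}
f_0 = f - f\mathbbm{1}_Z \in \mathcal{SHK}[a,b]
\quad\text{and}\quad
(HK)\int_{[a,b]} f_0 = (HK)\int_{[a,b]} f .
\end{equation*}
Next, fix $t_1 \in [a_1,b_1]$. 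If $t_1 \in Z_1$ then $(f_0)_{t_1} \equiv \theta$, trivially in $\mathcal{SHK}[a_2,b_2]$. If $t_1 \notin Z_1$ then $f_{t_1} \in \mathcal{S^{*}HK}[a_2,b_2] \subset \mathcal{SHK}[a_2,b_2]$, and $(f_0)_{t_1} = f_{t_1} - f_{t_1}\mathbbm{1}_{Z_2}$; since $\mathbbm{1}_{Z_2}$ has McShane integral zero on $[a_2,b_2]$ by Lemma \ref{L2.3}$(ii)$, Lemma \ref{L2.1}$(ii)$ gives $f_{t_1}\mathbbm{1}_{Z_2} \in \mathcal{SHK}[a_2,b_2]$ with integral $\theta$, so $(f_0)_{t_1} \in \mathcal{SHK}[a_2,b_2]$. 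The symmetric argument handles $(f_0)_{t_2} \in \mathcal{SHK}[a_1,b_1]$.

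Finally, for $(ii)$ and $(iii)$ I would invoke the $\mathcal{HK}$-version of Lemma \ref{L2.2} applied to $f_0$: part $(i)$ just proved verifies its hypotheses ($f_0 \in \mathcal{SHK}[a,b]$, all sections in the strong class), so $g(t_1) = (HK)\int_{[a_2,b_2]} (f_0)_{t_1}$ is strongly Henstock--Kurzweil integrable on $[a_1,b_1]$ with repeated integral equal to $(HK)\int_{[a,b]} f_0$, and the displayed identity $(HK)\int_{[a,b]} f_0 = (HK)\int_{[a,b]} f$ from the previous paragraph finishes $(ii)$; $(iii)$ is identical with the roles of the coordinates exchanged. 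The only point requiring genuine care — the ``main obstacle'' — is making sure that Lemma \ref{L2.3} is invoked with the hypothesis $f \in \mathcal{S^{*}HK}[a,b]$ and not merely $f \in \mathcal{SHK}[a,b]$ (the latter would not suffice, since the argument producing the lower bound \eqref{eqL24.1} via Theorem 3.6.13 in \cite{SCH} needs the two-partition characterization of the $\mathcal{S^{*}HK}$ property); once that is respected, the entire argument of Theorem \ref{T2.1} transfers line for line.
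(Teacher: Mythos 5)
Your proposal is correct and follows essentially the same route as the paper: apply the $\mathcal{HK}$-versions of Lemma \ref{L2.3} and Lemma \ref{L2.1}(ii) to get $f\mathbbm{1}_{Z}\in\mathcal{SHK}[a,b]$ with integral $\theta$, write $f_{0}=f-f\mathbbm{1}_{Z}$, treat the sections by the same two-case analysis, and conclude via the $\mathcal{HK}$-version of Lemma \ref{L2.2}. Your observation that the hypothesis must be $f\in\mathcal{S^{*}HK}[a,b]$ (since $\mathcal{S^{*}HK}\subset\mathcal{SHK}$ is only an inclusion, unlike the McShane case) correctly identifies why the paper states Theorem \ref{T2.2} this way.
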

\begin{proof}
$(i)$  
By Lemma \ref{L2.3} the function $\mathbbm{1}_{Z}$ 
is Henstock-Kurweil integrable with $(HK)\int_{[a,b]}\mathbbm{1}_{Z} =0$. 
Hence,  by $(ii)$ in Lemma \ref{L2.1} we have 
$f \mathbbm{1}_{Z} \in \mathcal{SHK}[a,b]$ 
and 
$(HK)\int_{[a,b]}f \mathbbm{1}_{Z} = \theta$. 
It follows that 
\begin{equation}\label{eqT22.1}
\begin{split}
f - f \mathbbm{1}_{Z} = f_{0} \in \mathcal{SHK}[a,b]
\quad\text{and}\quad
(HK)\int_{[a, b]} f_{0} 
=&
(HK)\int_{[a, b]} f. 
\end{split}
\end{equation}
We now fix an arbitrary $t_{1} \in [a_{1}, b_{1}]$. 
There are two cases to consider. 
\begin{itemize}
\item[(a)]
$t_{1} \in Z_{1}$. 
In this case, we have $(f_{0})_{t_{1}}(t_{2}) = \theta$ at all $t_{2} \in [a_{2},b_{2}]$. 
Thus,   $(f_{0})_{t_{1}} \in \mathcal{SHK}[a_{2},b_{2}]$.
\item[(b)]
$t_{1} \not\in Z_{1}$. 
In this case, we have $f_{t_{1}} \in \mathcal{SHK}[a,b]$ and 
$(f_{0})_{t_{1}}(t_{2}) = f_{t_{1}}(t_{2})-f_{t_{1}}(t_{2})\mathbbm{1}_{Z_{2}}(t_{2})$ at all $t_{2} \in [a_{2},b_{2}]$. 
Lemma \ref{L2.3} together with Lemma \ref{L2.1} yields that 
$f_{t_{1}}.\mathbbm{1}_{Z_{2}} \in \mathcal{SHK}[a_{2},b_{2}]$ 
with $(HK)\int_{[a_{2},b_{2}]} f_{t_{1}}.\mathbbm{1}_{Z_{2}} = \theta$. 
Therefore, $(f_{0})_{t_{1}} \in \mathcal{SHK}[a_{2},b_{2}]$.
\end{itemize}
Hence, we have $(f_{0})_{t_{1}} \in \mathcal{SHK}[a_{2},b_{2}]$ 
for each $t_{1} \in [a_{1},b_{1}]$. 
Similarly, it can be proved that 
$(f_{0})_{t_{2}} \in \mathcal{SHK}[a_{1},b_{1}]$ 
for each $t_{2} \in [a_{2},b_{2}]$.

Therefore, Lemma \ref{L2.2} together with $(i)$ and \eqref{eqT22.1} yields that 
$(ii)$ and $(iii)$ hold, and this ends the proof. 
\end{proof}

\bibliographystyle{plain}

\end{document}